\newtheorem{theorem}{Theorem}[section]
\newtheorem{definition}[theorem]{Definition}
\numberwithin{equation}{section}
\newtheorem{lemma}[theorem]{Lemma}
\newtheorem{proposition}[theorem]{Proposition}
\newtheorem{corollary}[theorem]{Corollary}
\newtheorem{remark}[theorem]{Remark}
\numberwithin{equation}{section}
\def\N{\mathbb{N}}
\renewcommand{\phi}{\varphi}
\renewcommand{\epsilon}{\varepsilon}
\newcommand{\til}{\widetilde}
\newcommand{\pr}[1]{\mathbb{P}\!\left(#1\right)}
\newcommand{\prstart}[2]{\mathbb{P}_{#2}\!\left(#1\right)}
\newcommand{\tn}{|\kern-.1em|\kern-0.1em|}
\newcommand\be{\begin{equation}}
\newcommand\ee{\end{equation}}
\begin{document}

\title{Analyticity for rapidly determined properties of Poisson Galton--Watson trees}
\author{Yuval Peres\thanks{Email: {\tt yuval@yuvalperes.com}.}, Andrew Swan\thanks{University of Cambridge, Statistical Laboratory, DPMMS. Email: {\tt acks2@cam.ac.uk}.}}

	\tikzstyle{every node}=[circle, draw, fill=black!50,
	inner sep=0pt, minimum width=4pt]
	
	\maketitle
	\begin{abstract}
			Let $T_\lambda$ be a Galton--Watson tree with Poisson($\lambda$) offspring, and let $A$ be a tree property. In this paper, are concerned with the regularity of the function $\mathbb{P}_\lambda(A):= \mathbb{P}(T_\lambda \vdash A)$. We show that if a property $A$ can be uniformly approximated by a sequence of properties $A_k$, depending only on the first $k$ vertices in the breadth first exploration of the tree, with a bound in probability of $\mathbb{P}_\lambda(A\triangle A_k) \le Ce^{-ck}$ over an interval $I = (\lambda_0, \lambda_1)$, then $\mathbb{P}_\lambda(A)$ is real analytic in $\lambda$ for $\lambda \in I$.   We also present some applications of our results, particularly to properties that are not expressible in the first order language of trees.
	\end{abstract}

\section{Introduction}

Let $X_1, X_2,\ldots$ be a sequence of independent Poisson random variables of parameter $\lambda$. Set $\til{X} = (X_1, X_2, X_3, \dots)$ and construct a tree $T$ so that node $i$ has $X_i$ children, labelling the nodes from top to bottom and left to right, i.e., breadth first ordering (see Figure 1). We call the sequence $\til{X}$ the seed of the Poisson Galton--Watson tree $T$ with parameter~$\lambda$. Note that if the tree has a finite number $n$ of vertices then the values $X_j$ for $j > n$ are irrelevant.

Although the offspring distribution completely determines the law of $T$, it does not provide an immediate sense of the tree's structure. A more transparent structural description of $T$ is provided by tree property probabilities, i.e., for a given tree property $A$, what is the probability that $T$ has this property? For convenience, we will identify this event $T \vdash A$ with the property $A$ itself, defining
\begin{equation}
f_A(\lambda) := \prstart{T \vdash A}{\lambda},
\end{equation}
where we write $\prstart{\cdot}{\lambda}$ to indicate that the parameter of the Poisson distribution is $\lambda$.
In this paper we are interested in the regularity of $f_{\lambda}(A)$ as a function of $\lambda$ for certain choices of the tree property~$A$.

In essence, this is a question about phase transitions: loss of regularity in $\prstart{A}{\lambda}$ at a particular value of $\lambda$ is interpreted as phase transition in structure of $T_\lambda$, as `seen by' property $A$.
We illustrate this idea as follows. Consider the two events
\begin{equation}\label{key1}
\begin{split}
A_1 &= \left\{\text{The tree is infinite}\right\} = \left\{|T_\lambda| = \infty\right\},\\
A_2 &=  \left\{ \text{The root has exactly one child}\right\} =  \left\{X_1 =  1\right\}.
\end{split}
\end{equation}
As is well known (see, e.g., Prop.\ 5.4 in \cite{LyonsPeres}), the probability $f_{A_1}(\lambda)$ that $T_\lambda$ is infinite satisfies
\begin{equation}\label{survive}
f_{A_1}(\lambda) = 1 -\exp(-\lambda f_{A_1}(\lambda))
\end{equation}
Equivalently,
\begin{equation}\label{key2}
f_{A_1}(\lambda) = 1+\frac{W_0(-\lambda e^{-\lambda})}{\lambda},
\end{equation}
where $W_0(x)$ is the principle branch of the Lambert W function studied in \cite{Corless}, the unique real solution to
\begin{equation}\label{key3}
W_0(x)e^{W_0(x)} = x, \quad  W_0(x) \ge -1.
\end{equation}
This function $f_{A_1}(\lambda)=\prstart{A_1}{\lambda}$ is real analytic on $I_1 = (0,1)$ and on $I_2 = (1,\infty)$, but has a branch cut singularity at $\lambda = 1$ and so is not real analytic on any interval containing this point: the interpretation is that the size of a Poisson Galton--Watson tree undergoes a phase transition at $\lambda = 1$. On the other hand, the probability that the root node has exactly one child is
\begin{equation}\label{key4}
f_{A_2}(\lambda)= \lambda e^{-\lambda},
\end{equation}
which is a real analytic function over the entire domain $I = (0, \infty)$. From the perspective of $A_2$, there is no phase transition.

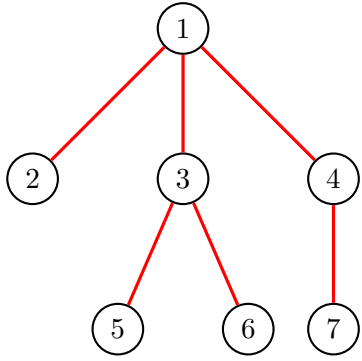
\begin{figure}
	\centering
	
	\begin{tikzpicture}
	\begin{scope}[every node/.style={circle,thick,draw}]
	\node (x1) at (3/1.5,6/1.5) {$1$};
	\node (x2) at (0,3/1.5) {$2$};
	\node (x3) at (3/1.5,3/1.5) {$3$};
	\node (x4) at (6/1.5,3/1.5) {$4$};
	\node (x5) at (1.7/1.5,0) {$5$};
	\node (x6) at (4.3/1.5,0) {$6$};
	\node (x7) at (6/1.5,0) {$7$};
	
	\end{scope}
	
	\begin{scope}[
	every node/.style={fill=white,circle},
	every edge/.style={draw=red,very thick}]
	\path [-] (x1) edge (x2);
	\path [-] (x1) edge (x3);
	\path [-] (x1) edge (x4);
	\path [-] (x3) edge (x5);
	\path [-] (x3) edge (x6);
	\path [-] (x4) edge (x7);
	\end{scope}
	\end{tikzpicture}
	\caption{The node labelling convention. The seed used to define this tree is $(3,0,2,1,0,0,0, \dots)$.}
\end{figure}

Recently, Podder and Spencer~\cite{Moumanti, Moumanti2} studied this question in the context of first order properties on the tree.
Informally speaking, a first order property can be expressed as a sentence in first order logic, which contains an infinite number of variables, the equality~``$=$'' relation, the binary parent relation $\pi(x,y)$ which is true if $y$ is the parent of $x$, the root symbol $R$, universal and existential quantifiers and the usual Boolean connectives.

In~\cite{Moumanti2}, Podder and Spencer used the Ehrenfeucht game for rooted trees and a contraction mapping theorem to prove the following:
\begin{theorem}\label{first}
	Let $A$ be a first order property. Then $f_A(\lambda)$ is a $C^\infty(0,\infty)$ function.
\end{theorem}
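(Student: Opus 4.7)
My approach would reduce the claim to a smoothness statement about finitely many ``type probabilities'' via an Ehrenfeucht--Fra\"iss\'e (EF) game analysis on rooted trees, and then exploit the branching recursion for Poisson Galton--Watson trees, closing by induction on the quantifier depth.

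First I would invoke the EF theorem for rooted trees: two rooted trees satisfy the same first order sentences of quantifier depth at most $k$ if and only if Duplicator wins the $k$-round EF game between them, and there are only finitely many equivalence classes, say $T_1^{(k)}, \ldots, T_{m_k}^{(k)}$, which I will call the $k$-types. Writing
$$p_i^{(k)}(\lambda) := \prstart{T_\lambda \text{ has } k\text{-type } T_i^{(k)}}{\lambda},$$
if $A$ has quantifier depth $k$ then $f_A(\lambda) = \sum_{i \in S_A} p_i^{(k)}(\lambda)$ for a finite index set $S_A \subseteq \{1,\ldots,m_k\}$. Thus it suffices to prove that each $p_i^{(k)}$ is $C^\infty$ on $(0,\infty)$.

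The crucial structural input is a composition lemma: the $k$-type of a rooted tree is determined by the \emph{truncated profile} of $(k-1)$-types of the subtrees hanging off the root's children, where for each $(k-1)$-type $T_j^{(k-1)}$ one records the number of children carrying that type, capped at some threshold $M=M(k)$. The cap is justified by a standard pebble-swapping argument in the EF game: once $M$ children of a given $(k-1)$-type are present, Duplicator can match any Spoiler move using one of them, so higher counts are indistinguishable. Combined with Poisson thinning, the branching property of $T_\lambda$ gives that, conditionally on the probabilities $p_j^{(k-1)}(\lambda)$, the numbers of children of each $(k-1)$-type form an independent family with
$$N_j \sim \mathrm{Poisson}\!\bigl(\lambda\, p_j^{(k-1)}(\lambda)\bigr).$$
Hence there exist functions $\Phi_i$, real analytic in all arguments, with
$$p_i^{(k)}(\lambda) = \Phi_i\!\bigl(\lambda,\, p_1^{(k-1)}(\lambda), \ldots, p_{m_{k-1}}^{(k-1)}(\lambda)\bigr),$$
namely finite sums of products of Poisson point-masses (for child-type counts below $M$) and truncated Poisson tail probabilities (for counts capped at $M$).

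From here I would induct on $k$. The base $k=0$ is trivial: a single $0$-type carries probability $1$. The inductive step is immediate from the display above and the chain rule: $C^\infty$ regularity of the $p_j^{(k-1)}$ in $\lambda$ propagates to $C^\infty$ regularity of $p_i^{(k)}$, and hence of $f_A$. The step I expect to be the main obstacle is making the composition lemma with truncation fully rigorous for rooted trees that are almost surely infinite in the supercritical regime $\lambda>1$: one must check that the EF-type of a subtree is a measurable function of the (possibly infinite) subtree, that the cap $M(k)$ truly renders higher multiplicities EF-equivalent irrespective of how the children's subtrees differ further, and that Duplicator's strategy indeed depends only on the truncated profile. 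Once this combinatorial input is in place, the analytic conclusion is essentially automatic. (As an alternative to induction on $k$, one could package all type probabilities into a single vector-valued fixed-point equation $\vec{p}=F(\lambda,\vec{p})$ and apply a contraction mapping theorem together with the implicit function theorem, as in the approach indicated in the statement.)
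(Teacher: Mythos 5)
The central step of your plan---the composition lemma asserting that the $k$-type of a rooted tree is determined by the capped profile of \emph{$(k-1)$-types} of the subtrees hanging off the root's children---is false, and the induction on quantifier depth collapses with it. Counterexample: let $\sigma=\exists x\,\forall y\,\neg\pi(y,x)$ (``some vertex has no children''), of quantifier depth $2$. Take $T$ to be an infinite path with one extra leaf attached at depth $10$, and $T'$ an infinite path. Each root has a single child subtree, and the two child subtrees have the same $1$-type (depth-$1$ sentences only record quantifier-free data about a single witness and the root, and these agree for the two subtrees), yet $T\models\sigma$ while $T'\not\models\sigma$, so $T\not\equiv_2 T'$. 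The correct composition theorem keeps the quantifier depth the same on both sides: the $k$-type of $T$ is determined by the capped multiset of \emph{$k$-types} of the child subtrees. Consequently the type probabilities do not satisfy a well-founded recursion $\vec p^{\,(k)}=\Phi(\lambda,\vec p^{\,(k-1)})$ but a genuine fixed-point equation $\vec p^{\,(k)}=F_k\bigl(\lambda,\vec p^{\,(k)}\bigr)$, with the unknown vector appearing inside the Poisson rates $\lambda p_j^{(k)}(\lambda)$. Identifying the relevant solution (such equations can have several, as with the survival probability in \eqref{survive}) and proving that it depends smoothly on $\lambda$ is the actual content of Podder and Spencer's argument via the Ehrenfeucht game and a contraction mapping theorem; it is not ``essentially automatic,'' and your parenthetical alternative, while on the right track, is exactly the part that would need to be developed. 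A sanity check pointing at the same problem: if your recursion were available with analytic $\Phi_i$, it would immediately yield real analyticity, strictly more than the $C^\infty$ conclusion of Theorem~\ref{first}---a sign that the easy recursion is not the true structure.

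Note also that this paper does not reprove Theorem~\ref{first}: it is quoted from \cite{Moumanti2}. The paper's own route to (a strengthening of) the conclusion is entirely different: one takes as input that every first order property is rapidly determined over $(0,\infty)$ (again from \cite{Moumanti2}), and then Theorem~\ref{thm:rapanal} converts exponential approximability by $k$-tautologically determined events into real analyticity by uniformly bounding the analytic continuations $f_{A_k\setminus A_{k-1}}(z)$ on small complex discs. If you want a self-contained proof in the spirit of this paper, the efficient path is to establish the rapid-determination property for first order sentences and invoke Theorem~\ref{thm:rapanal}, rather than to push the type recursion through directly.
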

\noindent Our main result, Theorem \ref{thm:rapanal}, is an extension of this to a larger class of properties. We also improve the smoothness. Before stating our result, we introduce some notation and definitions.

The \textbf{k-truncated seed} $\til{X}^{(k)} = (X_1,\dots, X_k)$ is given by the first $k$ elements of the seed $\til{X}$.

\begin{definition}\rm{
	An event $A$ is called \emph{$k$-tautologically determined} if there exists a set $B \subseteq \mathbb{N}^k$ such that
	\begin{equation}\label{key5}
	A = \left\{\til{X}^{(k)} \in  B\right\}.
	\end{equation}
	}
\end{definition}
\begin{definition}\rm{
Let $0 \le \lambda_0 < \lambda_1 \le \infty$ and let $I = (\lambda_0, \lambda_1)$ be an interval.
	An event $A$ is called \emph{rapidly determined} over $I$, if for every $\lambda\in I$ there exist positive constants $c$ and $C$, $k_0\in \mathbb{N}$, and a sequence of~$k$-tautologically determined events $A_k$ such that for all $k\geq k_0$
	\begin{equation}\label{key6}
	\prstart{A\triangle A_k}{\lambda} \le Ce^{-c k}.
	\end{equation}
	}
\end{definition}

\begin{theorem}[{\cite[Theorem~6.6]{Moumanti2}}]
	Every first order property is rapidly determined over $(0,\infty)$.
\end{theorem}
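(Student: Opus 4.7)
The plan is to follow the Ehrenfeucht--Fra\"{\i}ss\'e analysis of Podder and Spencer \cite{Moumanti2}: reduce to $r$-types, use the recursive (depth-based) description of $r$-types on rooted trees, and bound the probability that the breadth-first exploration has not yet fully uncovered the first $r$ levels of $T$. Any first-order sentence $\phi$ has a finite quantifier depth $r$, and by standard model-theoretic arguments there is a finite equivalence relation $\sim_r$ on rooted trees such that $T \vdash \phi$ is determined by the $\sim_r$-class, called the \emph{$r$-type} $\tau_r(T)$. Writing $A$ as a finite union of events $\{\tau_r(T) = \tau\}$, it is enough to establish that each such event is rapidly determined. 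On rooted trees the $r$-type admits a clean recursive description: $\tau_r(T)$ is a function of the multiset $\{\!\{\tau_{r-1}(T_v) : v \text{ child of root}\}\!\}$ with multiplicities saturated at some finite threshold $M_r$; unrolling, $\tau_r(T)$ depends only on the depth-$r$ truncation of $T$.

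Let $N_{r-1}$ denote the number of vertices of $T$ at depths strictly less than $r$. In breadth-first order, once $k \ge N_{r-1}$ the first $k$ BFS vertices contain every vertex at depth $\le r - 1$, so all their children --- the vertices at depth $r$ --- are revealed; consequently both the depth-$r$ truncation of $T$ and the event $\{k \ge N_{r-1}\}$ itself are $\til{X}^{(k)}$-measurable (the latter because one can inspect the BFS structure assembled from $\til{X}^{(k)}$ and check whether every depth-$(r-1)$ vertex has already been processed). I then take
\begin{equation*}
A_k \;=\; \bigl\{k \ge N_{r-1}\bigr\} \,\cap\, \bigl\{\tau_r(T) = \tau\bigr\}.
\end{equation*}
By construction $A_k$ is $k$-tautologically determined, and $A \triangle A_k \subseteq \{N_{r-1} > k\}$.

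It remains to establish the tail bound $\prstart{N_{r-1} > k}{\lambda} \le Ce^{-ck}$. The level sizes $Z_\ell := |T \cap \{\text{level } \ell\}|$ form a Poisson branching process: $Z_0 = 1$ and $Z_\ell = \sum_{i=1}^{Z_{\ell-1}} \xi_i^{(\ell)}$ with i.i.d.\ Poisson$(\lambda)$ variables $\xi_i^{(\ell)}$. Conditioning on the previous generation, the moment generating function $\phi_\ell(t) := \mathbb{E}_\lambda[e^{tZ_\ell}]$ satisfies the recursion $\phi_\ell(t) = \phi_{\ell-1}(\lambda(e^t - 1))$, so by induction on $\ell$ the function $\phi_\ell$ is finite in some neighborhood of $0$ for each fixed $\ell$. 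Hence $N_{r-1} = 1 + Z_1 + \cdots + Z_{r-1}$ has an exponentially decaying tail, yielding the required bound via a Chernoff estimate; the constants depend on $\lambda$ and on the fixed $r$, as permitted by the definition of rapidly determined.

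The main conceptual step to justify is the first one: the recursive ``truncate-and-saturate'' notion of $r$-type really does capture first-order equivalence of quantifier depth $r$ on rooted trees. This is the substantive content of the Ehrenfeucht--Fra\"{\i}ss\'e analysis for rooted trees in \cite{Moumanti2}; once it is in hand, the rest of the argument is an elementary Galton--Watson moment generating function estimate.
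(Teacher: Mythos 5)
The paper does not actually prove this statement---it is imported wholesale from Podder and Spencer \cite[Theorem~6.6]{Moumanti2}---so the only question is whether your argument stands on its own. It does not: the step where you ``unroll'' the recursion and conclude that the quantifier-depth-$r$ type $\tau_r(T)$ depends only on the depth-$r$ truncation of $T$ is false, and this is precisely the step carrying all of the difficulty. Consider the sentence $\exists x\,\forall y\,\neg\pi(y,x)$ (``some vertex is a leaf''), of quantifier depth $2$: a finite path of length $D+1$ and an infinite path have identical depth-$D$ truncations for every $D$, yet they disagree on this sentence (Spoiler wins the $2$-round Ehrenfeucht game by playing the leaf of the finite path and then the child of Duplicator's response in the infinite path). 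The same example refutes the composition lemma in the form you invoke, namely that $\tau_r(T)$ is a function of the saturated multiset of $\tau_{r-1}(T_v)$ over children $v$ of the root: the root-subtrees of these two paths are $\equiv_1$-equivalent (a depth-one sentence over the parent relation can only detect whether the root has a child), yet the two trees are not $\equiv_2$-equivalent. The underlying issue is that first-order sentences over the parent relation can assert the existence of a bounded local configuration at \emph{unbounded} depth, so no deterministic bounded-depth truncation decides them. This is exactly why the approximating events in Podder--Spencer are probabilistic rather than structural --- roughly, the first $k$ explored vertices determine the type of $T$ except on an event of probability $O(e^{-ck})$, because the capped multisets of subtree types stabilize once enough independent subtrees have been examined --- and why their proof requires the full recursive type analysis and contraction/fixed-point machinery rather than a locality argument.

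Consequently your containment $A\triangle A_k\subseteq\{N_{r-1}>k\}$ fails. The remaining portion of your argument --- that $N_{r-1}$, the number of vertices in the first $r-1$ generations, has an exponentially decaying tail via the iterated moment generating function $\phi_\ell(t)=\phi_{\ell-1}(\lambda(e^t-1))$ --- is correct, but it bounds the wrong error event. A repair would have to define $A_k$ so that $A\triangle A_k$ is contained in an event of the form ``among the subtrees hanging off the first $k$ explored vertices, some type relevant to the saturation threshold has not yet appeared often enough,'' and then prove that this event has probability $Ce^{-ck}$; that is the substantive content of \cite[Theorem~6.6]{Moumanti2} and cannot be obtained from the depth-truncation argument you propose.
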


We can now state our main result.

\begin{theorem}\label{thm:rapanal}
Let $0 \le \lambda_0 < \lambda_1 \le \infty$ and let $I = (\lambda_0, \lambda_1)$ be an interval. Suppose that the property~$A$ is rapidly determined over the interval~$I$. Then $f_A(\lambda)$ is a real analytic function on $I$. 
\end{theorem}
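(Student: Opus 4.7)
The plan is to show that $f_A$ coincides, on a complex disk around each $\lambda_*\in I$, with a uniform limit of entire functions, and then invoke Weierstrass's theorem on holomorphic limits. Since each $A_k$ is $k$-tautologically determined by some $B_k\subseteq\mathbb{N}^k$, the function
\begin{equation*}
f_{A_k}(\lambda) \;=\; \sum_{(n_1,\ldots,n_k)\in B_k} \prod_{i=1}^k \frac{\lambda^{n_i}e^{-\lambda}}{n_i!}
\end{equation*}
is dominated in absolute value term-by-term by a majorant whose full sum is $e^{k|\lambda|}$, so $f_{A_k}$ extends to an entire function of $\lambda\in\mathbb{C}$.

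The heart of the argument is a complex-valued estimate on the telescoping differences $f_{A_{k+1}}-f_{A_k}$. Viewing $A_k$ as $(k+1)$-tautologically determined by $B_k\times\mathbb{N}$, this difference is a signed Poisson sum over the symmetric difference $S_k := B_{k+1}\triangle(B_k\times\mathbb{N})$. Using $|\lambda^n e^{-\lambda}|=|\lambda|^n e^{-\mathrm{Re}\,\lambda}$, I would bound
\begin{equation*}
\bigl|f_{A_{k+1}}(\lambda)-f_{A_k}(\lambda)\bigr| \;\le\; e^{(k+1)(|\lambda|-\mathrm{Re}\,\lambda)}\,\mathbb{P}_{|\lambda|}\bigl(A_{k+1}\triangle A_k\bigr).
\end{equation*}
Whenever $|\lambda|\in I$, the triangle inequality $\mathbb{P}_{|\lambda|}(A_{k+1}\triangle A_k)\le\mathbb{P}_{|\lambda|}(A\triangle A_k)+\mathbb{P}_{|\lambda|}(A\triangle A_{k+1})$, combined with rapid determination at the real positive number $|\lambda|$, bounds the probability factor by $C'e^{-ck}$.

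Now fix $\lambda_*\in I$ and choose $r>0$ small enough that $[\lambda_*-r,\lambda_*+r]\subset I$, that the rapid-determination constants $c,C$ may be taken uniformly for $\lambda$ in this real subinterval, and that $r<c$. On the disk $D=\{\lambda:|\lambda-\lambda_*|<r/2\}$ one has $|\lambda|\in I$ and $|\lambda|-\mathrm{Re}\,\lambda\le|\mathrm{Im}\,\lambda|\le r/2$, so the previous estimate gives
\begin{equation*}
\bigl|f_{A_{k+1}}(\lambda)-f_{A_k}(\lambda)\bigr| \;\le\; C'' e^{-(c-r/2)k}
\end{equation*}
for $k\ge k_0$. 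This is summable, so $\{f_{A_k}\}$ is uniformly Cauchy on $D$ and converges uniformly to a function $g$, holomorphic on $D$ by Weierstrass. Since $|f_A(\lambda)-f_{A_k}(\lambda)|\le\mathbb{P}_\lambda(A\triangle A_k)\to 0$ on $D\cap\mathbb{R}$ by rapid determination, $g=f_A$ there, so $f_A$ is real analytic at $\lambda_*$. As $\lambda_*$ was arbitrary, $f_A$ is real analytic on $I$.

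The main obstacle is the passage from the real probability hypothesis to a complex estimate: one cannot apply the bound $\mathbb{P}_\lambda(A\triangle A_k)\le Ce^{-ck}$ directly when $\lambda$ is non-real, but replacing $\lambda$ by $|\lambda|$ inside each Poisson weight costs only the prefactor $e^{(k+1)(|\lambda|-\mathrm{Re}\,\lambda)}$, which is tame precisely on small complex neighborhoods of the positive real axis — this is what forces the radius $r$ to be chosen below the decay constant $c$. A secondary technical point is extracting uniform rapid-determination constants over a compact real subinterval of $I$; this should follow routinely from the pointwise hypothesis and may be handled at the start of the argument.
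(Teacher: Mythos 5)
Your overall architecture --- entire extensions of the $f_{A_k}$, a telescoping/Cauchy estimate, a uniform exponential bound on a complex disc, then Weierstrass plus agreement on the real axis --- matches the paper's, and your complex estimate $|f_{A_{k+1}}(z)-f_{A_k}(z)|\le e^{(k+1)(|z|-\mathrm{Re}\,z)}\,\prstart{A_{k+1}\triangle A_k}{|z|}$ is correct. The gap is the step you defer as routine: extracting rapid-determination constants $c,C,k_0$ that are \emph{uniform} over the real interval swept out by $|z|$ as $z$ ranges over the disc. The hypothesis is pointwise --- for each $\lambda\in I$ there exist $c(\lambda),C(\lambda),k_0(\lambda)$ --- and pointwise exponential decay with point-dependent constants does not in general upgrade to uniform exponential decay on a compact subinterval (consider $g_k(\lambda)=\min\left(1,e^{h(\lambda)-k}\right)$ with $h$ finite everywhere but unbounded on every interval). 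Nor is $\lambda\mapsto\prstart{A\triangle A_k}{\lambda}$ obviously continuous: $A$ may depend on the entire seed, and the infinite product measures for distinct parameters are mutually singular. So, as written, your bound $\prstart{A_{k+1}\triangle A_k}{|z|}\le C'e^{-ck}$ is not justified by the hypothesis.

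The gap is repairable, in at least two ways. (i) Within your framework: since $A_{k+1}\triangle A_k$ depends only on the first $k+1$ coordinates, Cauchy--Schwarz with the density $\prod_{i\le k+1}\left(e^{-\mu}\mu^{X_i}\right)/\left(e^{-\lambda_*}\lambda_*^{X_i}\right)$ gives
\begin{equation*}
\prstart{A_{k+1}\triangle A_k}{\mu}\;\le\;\prstart{A_{k+1}\triangle A_k}{\lambda_*}^{1/2}\exp\!\left(\frac{(k+1)(\mu-\lambda_*)^2}{2\lambda_*}\right),
\end{equation*}
which is controlled using rapid determination at the single point $\lambda_*$ once $|\mu-\lambda_*|$ is small enough relative to $\sqrt{c\lambda_*}$. (ii) The paper's route: compare each complex weight directly to the weight at the fixed centre, $\left|z^{m}e^{-z}/m!\right|\le(1+\epsilon)^{m}\lambda_*^{m}e^{-\lambda_*}/m!$ for $z\in D_\delta(\lambda_*)$; the penalty is then $(1+\epsilon)^{\ell}$ with $\ell=\sum_i m_i$ rather than $e^{O(k)}$, and is handled by splitting the sum at $\ell=3k\lambda_*$ and bounding the upper range by a Poisson large-deviation tail. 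Either repair uses only the constants at $\lambda_*$ and closes your argument; the rest of your proof is sound.
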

The conclusion of the theorem means that for every $\lambda \in I$, there exists $\delta>0$ so that the function $f_A(\lambda)$ can be extended to a complex analytic function $f_A(z)$ on the disc $D_\delta(\lambda) = \left\{z\; | \; |z - \lambda| \le \delta \right\}$.
Theorem \ref{thm:rapanal} improves on Theorem \ref{first} in two ways.  Firstly, we broaden the scope of applicability to the larger class of rapidly determined properties, and secondly, we improve the regularity from $C^\infty$ to real analytic.
The  collection of first order properties is countable, since every first order property is specified by a finite sequence from a countable alphabet. On the other hand, Proposition \ref{uncount} in Section 3 describes uncountably many rapidly determined properties.

\begin{corollary}
	Let $A$ be a first order property. Then $f_A(\lambda)$ is a real analytic function of $\lambda\in (0,\infty)$.
\end{corollary}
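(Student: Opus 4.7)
The plan is, for each $\lambda_*\in I$, to show that the entire-function sequence $\{f_{A_k}\}$ is uniformly Cauchy on a small complex disk around $\lambda_*$, with holomorphic limit agreeing with $f_A$ on the real axis; this exhibits $f_A$ as real analytic at $\lambda_*$, and since $\lambda_*\in I$ is arbitrary, real analytic throughout $I$.

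First, each $f_{A_k}$ extends canonically to an entire function of $z\in\mathbb{C}$: writing $B_k\subseteq\mathbb{N}^k$ for the tautological set determining $A_k$,
\[
f_{A_k}(z)\;=\;\sum_{\vec n\in B_k}\prod_{i=1}^{k}\frac{e^{-z}z^{n_i}}{n_i!},
\]
and termwise absolute bounds give $|f_{A_k}(z)|\le e^{k(|z|-\Re z)}$ with uniform convergence on compact sets. Fix $\rho>0$ small with $[\lambda_*-\rho,\lambda_*+\rho]\subset I$. The identity $|z|-\Re z=(\Im z)^2/(|z|+\Re z)$ gives $|f_{A_k}(z)|\le M_k:=\exp\!\big(k\rho^2/(2(\lambda_*-\rho))\big)$ on $\overline{D_\rho(\lambda_*)}$, while on the real segment $J:=[\lambda_*-\rho/2,\lambda_*+\rho/2]$ the rapid-determination hypothesis yields $|f_{A_k}(\lambda)-f_{A_{k-1}}(\lambda)|\le C_0 e^{-ck}$ uniformly in $\lambda\in J$, for constants $c,C_0>0$ depending on $\lambda_*$.

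The core step is an interpolation between these two estimates via the Hadamard two-constants theorem. Setting $g_k:=f_{A_k}-f_{A_{k-1}}$, entire with $|g_k|\le 2M_k$ on $\overline{D_\rho(\lambda_*)}$ and $|g_k|\le C_0 e^{-ck}$ on $J$, apply the two-constants theorem on the upper half-disk $D_\rho^+(\lambda_*)$ (the lower half follows from the Schwarz reflection $f_{A_k}(\bar z)=\overline{f_{A_k}(z)}$), taking $J$ as the ``small'' boundary portion and the rest of $\partial D_\rho^+(\lambda_*)$ as the ``large'' portion. For each $z\in D_\rho^+(\lambda_*)$ this produces a harmonic-measure exponent $\omega(z)\in(0,1)$ with
\[
|g_k(z)|\;\le\;(2M_k)^{1-\omega(z)}\bigl(C_0 e^{-ck}\bigr)^{\omega(z)}.
\]
Since harmonic measure at an interior boundary point of $J$ tends to $1$, for any $\eta>0$ there exists $\rho'\in(0,\rho/2)$ with $\omega(z)\ge 1-\eta$ uniformly on $D_{\rho'}(\lambda_*)$. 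Taking $\eta$ small enough (with $\rho,c$ fixed) drives the resulting exponent of $k$ negative, yielding $|g_k(z)|\le C_1 e^{-\alpha k}$ for some $\alpha>0$, uniformly on $D_{\rho'}(\lambda_*)$.

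Telescoping, $f_{A_k}\to F$ uniformly on $D_{\rho'}(\lambda_*)$ with $F$ holomorphic, and on $J\cap D_{\rho'}(\lambda_*)$ rapid determination forces $F=f_A$, so $F$ is the desired analytic extension. The main obstacles are twofold: (i) upgrading the pointwise rapid-determination bound to one uniform over $J$, which is natural if the approximating sequence $A_k$ may be taken independent of $\lambda$ (the intended reading of the definition); and (ii) the quantitative harmonic-measure estimate $\omega(z)\to 1$ as $z\to J^{\circ}$, which must be combined with the $\rho$-dependent growth of $M_k$ and the decay rate $c$ to exhibit a compatible choice of $(\rho,\rho')$. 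Once both are settled, the remainder is bookkeeping.
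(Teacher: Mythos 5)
In the paper this corollary has no proof of its own: it is the immediate conjunction of the quoted Podder--Spencer result (every first order property is rapidly determined over $(0,\infty)$) with Theorem~\ref{thm:rapanal}. What you have written is essentially an independent proof of Theorem~\ref{thm:rapanal} itself, by a genuinely different route: instead of the paper's direct domination of the complexified Poisson weights (comparing $|z^re^{-z}/r!|$ with $(1+\epsilon)^r\lambda^re^{-\lambda}/r!$ on a small disc and splitting the sum at $\ell=[3k\lambda]$), you interpolate between a crude growth bound $|f_{A_k}(z)|\le e^{k(|z|-\Re z)}$ on a disc and the rapid-determination bound on a real segment, via the two-constants theorem and harmonic measure. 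The complex-analytic ingredients you use are correct as stated: the bound $|z|-\Re z=(\Im z)^2/(|z|+\Re z)$, the reflection $f_{A_k}(\bar z)=\overline{f_{A_k}(z)}$, and the fact that $\omega(z)\to 1$ as $z$ approaches an interior point of $J$ all check out, and the exponent bookkeeping does produce a negative rate once $\omega$ is close enough to $1$.

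The genuine gap is the one you flag as obstacle (i), and it is not a formality. Your argument needs $|f_{A_k}(\lambda)-f_{A_{k-1}}(\lambda)|\le C_0e^{-ck}$ to hold \emph{uniformly for $\lambda$ in the segment $J$}, with a single sequence $(A_k)$ and single constants $C_0,c$. The definition of ``rapidly determined'' in the paper is pointwise: for each $\lambda$ there exist constants and a sequence $(A_k)$, both of which are allowed to depend on $\lambda$; and the cited theorem that first order properties are rapidly determined is likewise stated pointwise. A bound at the single point $\lambda_*$ is useless for the two-constants theorem, since a singleton has harmonic measure zero, so without establishing local uniformity (either by tracking constants through the Podder--Spencer argument or by proving a separate equicontinuity statement) the interpolation cannot start. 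The paper's proof is engineered precisely to avoid this: it uses the rapid-determination bound only at the single real point $\lambda$, and gets smallness of $f_{A_k\setminus A_{k-1}}$ on a complex disc by pointwise domination of each term of the series, paying only a factor $(1+\epsilon)^{\ell}$ that is controlled by a Poisson tail estimate. So your approach buys an appealing soft complex-analytic mechanism, but at the cost of a strictly stronger hypothesis than the one actually available; as written, the proof does not close.
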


\begin{proposition}\label{pro:examples}
	Let
	\begin{align*}
	A_1&=\{ \text{there exists a node on an even level with exactly one child} \}\\
	A_2&=\{\text{there exists a node on a prime level with exactly two children}\}.
	\end{align*}
	Then $A_1$ and $A_1\cup A_2$ are both rapidly determined on $(0,\infty)$.
\end{proposition}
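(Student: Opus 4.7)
The plan is to approximate both properties by events depending only on the first $k$ vertices in the BFS exploration, then reduce both statements to a single exponential tail estimate.

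Let $\ell(i)$ denote the level of vertex $i$ in the BFS exploration (determined by $\til X^{(i-1)}$, with the convention $\ell(i):=\infty$ if $i>|T|$), and set
\[
A_{1,k}\,:=\,\bigl\{\exists\, i\in\{1,\ldots,k\},\ i\le|T|:\ \ell(i)\text{ even and }X_i=1\bigr\},
\]
with $A_{2,k}$ defined analogously using $\ell(i)$ prime and $X_i=2$; both are $k$-tautologically determined. I would take $A_{1,k}$ as the approximation to $A_1$ and $B_k:=A_{1,k}\cup A_{2,k}$ as the approximation to $A_1\cup A_2$. Since $A_{1,k}\subseteq A_1$ and $B_k\subseteq A_1\cup A_2$, with equality on $\{|T|\le k\}$ (when the first $k$ BFS nodes capture the whole tree), and since $B_k^c\subseteq A_{1,k}^c$, both symmetric differences are contained in $\{|T|>k\}\cap A_{1,k}^c$. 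Hence both claims reduce to showing
\[
\prstart{|T|>k,\,A_{1,k}^c}{\lambda}\ \le\ Ce^{-ck}\quad\text{for every }\lambda\in(0,\infty).
\]

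To prove this estimate, let $N_e(k)$ denote the number of even-level vertices among the first $k$ in BFS, and split
\[
\prstart{|T|>k,\,A_{1,k}^c}{\lambda}\ \le\ \prstart{A_{1,k}^c,\,N_e(k)\ge\alpha k}{\lambda}+\prstart{|T|>k,\,N_e(k)<\alpha k}{\lambda}
\]
for a small $\alpha=\alpha(\lambda)>0$. For the first term, let $\sigma_m$ denote the BFS position of the $m$-th even-level vertex; the key observation is that $\{\sigma_m=j\}$ is determined by $X_1,\ldots,X_{j-1}$, so on $\{\sigma_m<\infty\}$ the variable $X_{\sigma_m}$ is Poisson$(\lambda)$ and independent of $X_1,\ldots,X_{\sigma_m-1}$. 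Iterating this yields $\prstart{A_{1,k}^c,\,N_e(k)\ge m}{\lambda}\le(1-\lambda e^{-\lambda})^m$, and taking $m=\alpha k$ gives an exponential bound.

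For the structural term $\prstart{|T|>k,\,N_e(k)<\alpha k}{\lambda}$ I would proceed by a case analysis on $\lambda$. For $\lambda<1$, the standard subcritical tail $\prstart{|T|>k}{\lambda}\le Ce^{-ck}$ suffices immediately. For $\lambda>1$, decompose $\{|T|>k\}=\{k<|T|<\infty\}\cup\{|T|=\infty\}$: the finite piece has an exponential tail via the Poisson duality (conditional on extinction, $T$ is a subcritical $\mathrm{PGW}(\lambda q)$ tree with $\lambda q<1$), and on survival the level sizes satisfy $Z_\ell\sim W\lambda^\ell$ with $W>0$, so that a direct computation shows $N_e(k)/k$ tends to $\lambda/(\lambda+1)$ or $1/(\lambda+1)$ (according to the parity of the last complete level), with exponential concentration below these limits obtained from standard large-deviation estimates for supercritical Galton--Watson level sizes. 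The main obstacle is the critical case $\lambda=1$: since $\prstart{|T|>k}{1}\sim c/\sqrt{k}$ is only polynomially small, one needs the sharper conditional bound $\prstart{N_e(k)<\alpha k\mid|T|>k}{1}\le Ce^{-ck}$. I expect to establish this via the combinatorial description of a critical PGW conditioned on its size (its offspring sequence is distributed as i.i.d.\ Poisson$(1)$ conditioned on the cycle-lemma event $\sum_{j\le n}X_j=n-1$), together with an exponential concentration argument for the fraction of BFS positions landing on even levels in such a conditioned tree.
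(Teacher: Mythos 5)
Your reduction is exactly the paper's: the same $k$-truncated approximating events, the same observation that both symmetric differences lie in $\{|T|\ge k\}\cap A_{1,k}^c$, the same split according to whether the number $N_e(k)$ of even-level nodes among the first $k$ is at least $\alpha k$, and the same treatment of the first term via the fact that the offspring counts of successive even-level vertices are i.i.d.\ Poisson$(\lambda)$, giving $(1-\lambda e^{-\lambda})^{\alpha k}$. The gap is entirely in the structural term $\prstart{|T|>k,\, N_e(k)<\alpha k}{\lambda}$. Your proposed proof of it is a three-way case analysis in which the supercritical concentration estimate is only sketched and the critical case $\lambda=1$ is explicitly left open (``I expect to establish this via\dots''); as written, the proposition is therefore not proved on all of $(0,\infty)$, and the route you suggest for $\lambda=1$ (conditioning on the tree size via the cycle lemma and proving exponential concentration for the fraction of BFS positions on even levels in the conditioned tree) is a substantial project in its own right.

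The missing idea is that no conditioning on survival and no case analysis in $\lambda$ is needed, because one can bound the \emph{joint} event directly by a deterministic counting argument (this is Lemma~3.3 of the paper). On $\{|T|\ge k\}$ the first $k$ BFS nodes all exist and split as $|E_k|+|O_k|=k$ into even- and odd-level nodes. Since level $0$ is even, every odd-level node has an even-level parent, and BFS order places parents before children, so each of the $|O_k|$ odd-level nodes among the first $k$ is a child of one of the $|E_k|$ even-level nodes among the first $k$. Writing $Y_1,Y_2,\dots$ for the i.i.d.\ Poisson$(\lambda)$ offspring counts of the successive even-level vertices, this gives, on $\{|T|\ge k\}$,
\begin{equation*}
\sum_{i=1}^{|E_k|} Y_i \;\ge\; |O_k| \;=\; k-|E_k|.
\end{equation*}
Hence with $n=\lfloor k/(2\lambda+1)\rfloor$,
\begin{equation*}
\prstart{|E_k|\le n,\ |T|\ge k}{\lambda}\;\le\;\prstart{\sum_{i=1}^{n}Y_i\ge k-n}{\lambda},
\end{equation*}
and since $\sum_{i=1}^{n}Y_i$ is Poisson with mean $n\lambda\approx k\lambda/(2\lambda+1)$ while $k-n\approx 2n\lambda$, a single Poisson large-deviation bound yields $e^{-ck}$ for every fixed $\lambda\in(0,\infty)$, including $\lambda=1$. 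Substituting this for your case analysis closes the gap and the rest of your argument goes through unchanged.
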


\begin{remark}\rm{
We note that neither $A_1$ nor $A_1\cup A_2$ are first order properties. This follows from a simple modification of \cite[Theorems~2.1.3 and~2.3.3]{Strangelogic}.
}	
\end{remark}

Unlike Podder and Spencer, our methods are not model theoretic in nature.  Instead, we take a more direct, complex analytic approach. It is similar in spirit to the route taken in \cite{Peres1991, Peres1992}, where the regularity of Lyapunov exponents for products of discrete random matrices was studied.

\section{Analyticity for rapidly determined properties}

In this section we prove Theorem~\ref{thm:rapanal}. We begin with a preliminary result.

\begin{lemma}\label{lem:extension}
Let $k\in \mathbb{N}$ and let $A$ be a $k$-tautologically determined event. Then $f_A(\lambda)$ may be analytically continued to an entire function $f_A(z)$.
\end{lemma}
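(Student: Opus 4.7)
The plan is to write $f_A(\lambda)$ as an explicit multivariate series in $\lambda$ and then check that, after replacing $\lambda$ by a complex variable $z$, the series converges absolutely and uniformly on compact subsets of $\mathbb{C}$. Since the tree property $A$ is $k$-tautologically determined, there exists $B\subseteq \mathbb{N}^k$ such that $A = \{\til{X}^{(k)}\in B\}$, and as $X_1,\ldots,X_k$ are i.i.d.\ Poisson($\lambda$), we get
\begin{equation*}
f_A(\lambda) = \sum_{(n_1,\ldots,n_k)\in B} \prod_{i=1}^{k} \frac{\lambda^{n_i} e^{-\lambda}}{n_i!} = e^{-k\lambda}\sum_{(n_1,\ldots,n_k)\in B} \frac{\lambda^{n_1+\cdots+n_k}}{n_1!\cdots n_k!}.
\end{equation*}

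The natural candidate extension is obtained by substituting $z\in\mathbb{C}$ for $\lambda$ in both factors on the right. The factor $e^{-kz}$ is entire, so the task reduces to showing that the power series
\begin{equation*}
g(z) := \sum_{(n_1,\ldots,n_k)\in B} \frac{z^{n_1+\cdots+n_k}}{n_1!\cdots n_k!}
\end{equation*}
defines an entire function. For this, I would bound the series by its analogue over all of $\mathbb{N}^k$:
\begin{equation*}
\sum_{(n_1,\ldots,n_k)\in B} \frac{|z|^{n_1+\cdots+n_k}}{n_1!\cdots n_k!} \;\le\; \sum_{(n_1,\ldots,n_k)\in \mathbb{N}^k} \frac{|z|^{n_1+\cdots+n_k}}{n_1!\cdots n_k!} \;=\; e^{k|z|},
\end{equation*}
which is finite for every $z\in\mathbb{C}$ and uniformly bounded on any compact subset of $\mathbb{C}$. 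Thus the defining series for $g$ converges absolutely and uniformly on compact sets, so by Weierstrass' theorem (or termwise analyticity of polynomials) $g$ is entire, and hence so is $f_A(z) = e^{-kz} g(z)$.

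There is no substantial obstacle here: the argument is essentially the observation that the Poisson probability mass function is entire in $\lambda$ and that a finite product of such factors, summed over any subset of index tuples, enjoys the same convergence properties as the full exponential generating series. The only mildly delicate point is verifying uniform convergence on compacts rather than just pointwise convergence, but this is immediate from the domination by $e^{k|z|}$ on any closed disc $\{|z|\le R\}$.
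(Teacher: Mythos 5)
Your proposal is correct and follows essentially the same route as the paper: both expand $f_A(\lambda)$ as $e^{-k\lambda}$ times a power series over $B$ and bound it by the corresponding sum over all of $\mathbb{N}^k$ (the paper groups terms by total degree and bounds the coefficients by $k^n$, which is the same estimate you obtain by dominating the whole series by $e^{k|z|}$). No issues.
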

\begin{proof}[\bf Proof]
By the assumption on $A$ there exists $B\subseteq  \mathbb{N}^k$ such that $A=\{\tilde{X}_\lambda^{(k)} \in  B\}$. Therefore we have
\begin{equation*}
\begin{split}
\prstart{A}{\lambda} = \pr{\tilde{X}_\lambda^{(k)} \in  B} &= \sum_{(m_1, \dots, m_k) \in B}\prod_{i = 1}^k e^{-\lambda} \frac{\lambda^{m_i}}{m_i!}\\
&= e^{-k\lambda} \sum_{(m_1, \dots, m_k) \in B} \frac{\lambda^{m_1+\dots+m_k}}{m_1!\dots m_k!}= e^{-k\lambda} \sum_{n = 0}^\infty a_n \frac{\lambda^n}{n!},\\
\end{split}
\end{equation*}
where
\begin{equation}\label{key7}
0 \le a_n = \sum_{\substack{(m_1, \dots, m_k)\in B\\ m_1+\dots+m_k = n}} {n \choose m_1, \dots, m_k} \le \sum_{\substack{(m_1, \dots, m_k)\in \N^k\\ m_1+\dots+m_k = n}}  {n \choose m_1, \dots, m_k} = k^n.
\end{equation}
Since
\begin{equation}\label{key8}
\lim_{n \rightarrow \infty} \left|\frac{a_n}{n!}\right|^{\frac{1}{n}} \le \lim_{n\rightarrow \infty}\left|\frac{k^n}{n!}\right|^{\frac{1}{n}} = 0,
\end{equation}
it follows that $\prstart{A}{\lambda}$ may be analytically continued to an entire function $\prstart{A}{z}$.
\end{proof}

\begin{proof}[\bf Proof of Theorem~\ref{thm:rapanal}]
Let $\lambda \in I$.
	Since $A$ is rapidly determined over the interval $I$, there exist constants $c$ and $C$, $k_0\in \N$ and a sequence of $k$-tautologically determined events $(A_k)$ so that for all $k\geq k_0$
	\begin{align}\label{eq:rapidly}
			\prstart{A\triangle A_k}{\lambda}\leq Ce^{-ck}.	
	\end{align}
    From this it then follows that
    \begin{align*}
    \prstart{A}{\lambda} = \lim_{k\to\infty} \prstart{A_k}{\lambda} = \lim_{n\to\infty} \sum_{k=1}^{n}\prstart{A_k\setminus A_{k-1}}{\lambda}.
    \end{align*}
	From Lemma~\ref{lem:extension} we get that $f_{A_k}(\lambda)=\prstart{A_k}{\lambda}$ can be extended to a complex analytic function over $\mathbb{C}$ that we denote $f_{A_k}(z)$. In order to establish that $f_A(\lambda)$ can also be extended to an analytic function in some neighbourhood of $\lambda\in I$, it suffices to show that for every $\lambda\in I$ there exist positive constants~$c_1$ and~$c_2$ and $\delta>0$ such that for all $z\in D_\delta(\lambda) = \{z\in \mathbb{C}: |z-\lambda|\leq \delta\}$ we have
	\begin{align}\label{eq:uniform}
		|f_{A_k\setminus A_{k-1}}(z)| \leq c_1 e^{-c_2k}.
	\end{align}
	Indeed, this will then imply that $f_{A_n}(z)$ converges uniformly to a function denoted $f_{A}(z)$,  which will also be analytic on $D_\delta(\lambda)$.
	
	We start by showing that for all $k\in \N$, if $\Gamma$ is a $k$-tautologically determined event with $$\Gamma=\{\til{X}^{(k)}\in M\}$$ with $M\subseteq \N^k$, then we have the following: for all $\epsilon>0$ there exists $\delta=\delta(\epsilon,\lambda)$ so that for every~$z\in D_\delta(\lambda)$ the analytic continuation of $f_\Gamma(\lambda)$ satisfies
	\begin{align}\label{eq:goal}
			|f_{\Gamma}(z)| \leq \sum_{\ell=0}^{\infty} (1+\epsilon)^{\ell} \sum_{\substack{(m_1,\ldots, m_k)\in M \\ \sum_{i\leq k}m_i=\ell }} \prod_{i=1}^{k} e^{-\lambda} \frac{\lambda^{m_i}}{m_i!}.
	\end{align}
	We have
	\begin{align}\label{eq:complexprob}
		f_\Gamma(z) = \sum_{\ell=0}^{\infty}\sum_{\substack{(m_1,\ldots, m_k)\in M \\ \sum_{i\leq k}m_i=\ell }} \prod_{i=1}^{k} e^{-z} \frac{z^{m_i}}{m_i!}.
	\end{align}
	For $z\in D_\delta(\lambda)$ with $\delta \le \min \left\{\frac{\lambda \epsilon}{2}, \log{\frac{1+\epsilon}{1+\epsilon/2}}\right\}$, we have for every $r\geq 0$
\begin{equation*}
\left| \frac{z^{r} e^{-z}}{r!} \right| \le   \left|1+\frac{\delta}{\lambda}\right|^r e^{\delta}\frac{\lambda^{r} e^{-\lambda}}{r!}\leq \left(1+\epsilon\right)^{r} \frac{\lambda^{r}e^{-\lambda}}{r!}.
\end{equation*}
	Using this for all $k$ terms of the product appearing in~\eqref{eq:complexprob} now proves~\eqref{eq:goal}.
	
	Since the event $A_k\setminus A_{k-1}$ is $k$-tautologically determined, we let $B\subseteq \N^k$ be such that $$A_k\setminus A_{k-1}=\{\til{X}^{(k)}\in B\}.$$ We  can now apply~\eqref{eq:goal} and get for $\epsilon$ and $\delta$ as above
	\begin{align}\label{eq:long}
	\begin{split}
		|f_{A_k\setminus A_{k-1}}(z)|&\leq \sum_{\ell \leq [3k\lambda]}(1+\epsilon)^{\ell} \sum_{\substack{(m_1,\ldots, m_k)\in B \\ \sum_{i\leq k}m_i=\ell }} \prod_{i=1}^{k} e^{-\lambda} \frac{\lambda^{m_i}}{m_i!} + \sum_{\ell > [3k\lambda]}(1+\epsilon)^{\ell} \sum_{\substack{(m_1,\ldots, m_k)\in B \\ \sum_{i\leq k}m_i=\ell }} \prod_{i=1}^{k} e^{-\lambda} \frac{\lambda^{m_i}}{m_i!} \\
		&\leq (1+\epsilon)^{3k\lambda} \cdot \prstart{A_k\setminus A_{k-1}}{\lambda} + \sum_{\ell >[3k\lambda]} (1+\epsilon)^{\ell} \cdot \prstart{\sum_{i=1}^{k}X_i=\ell}{\lambda}.
		\end{split}
	\end{align}
	Since $\sum_{i=1}^{k}X_i$ has the Poisson distribution with parameter $k\lambda$, it follows that there exists a positive constant $c_1$ so that for $\ell>[3k\lambda]$
	\[
	\prstart{\sum_{i=1}^{k}X_i=\ell}{\lambda}\leq e^{-c_1 \ell}.
	\]
	From~\eqref{eq:rapidly} we get that there exist positive constants $c_2$ and $c_3$ so that for all $k$
	\[
	\prstart{A_k\setminus A_{k-1}}{\lambda}\leq c_2e^{-c_3k}.
	\]
	Taking $\epsilon$ sufficiently small and using the two bounds above into~\eqref{eq:long} we obtain for positive constants~$c_4$ and $c_5$
	\begin{align*}
		|f_{A_k\setminus A_{k-1}}(z)| \leq c_4 e^{-c_5 k}
	\end{align*}
	and this concludes the proof of~\eqref{eq:uniform} and also the proof of the theorem.
	\end{proof}

\section{Examples of rapidly determined properties}

In this section we provide some examples of rapidly determined properties to demonstrate the applicability of Theorem~\ref{thm:rapanal}.

We start by showing that when the tree is subcritical, every property is rapidly determined.
For a tree $T$ we write $|T|$ for the total number of vertices of $T$.

\begin{proposition}
Let $0 \le \lambda_0 < \lambda_1 \le 1$. Then every property $A$ is rapidly determined on the interval $I=(\lambda_0,\lambda_1)$.
\end{proposition}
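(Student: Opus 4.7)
The plan is to exploit the fact that a Poisson$(\lambda)$ Galton--Watson tree with $\lambda < 1$ is subcritical, so $|T_\lambda|$ has exponential tails. Since the property $A$ depends only on the tree structure, and the whole tree is determined by the seed $\tilde{X}^{(|T|)}$, conditioning on $|T| \le k$ makes $A$ depend on only the first $k$ coordinates of the seed. Thus for each $\lambda\in I$ and $k\in\N$ I would take
\begin{equation*}
A_k := A \cap \{|T_\lambda| \le k\}.
\end{equation*}
The first step is to verify that $A_k$ is $k$-tautologically determined. For this, one notes that the BFS walk $S_n = 1 + \sum_{i=1}^n (X_i - 1)$ encodes the exploration, and $|T| = \min\{n\ge 1 : S_n = 0\}$; so on the event $\{|T|\le k\}$, the seed $(X_1,\dots,X_k)$ recovers the entire tree. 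Hence one can define $B\subseteq \N^k$ to be the set of those $(m_1,\dots,m_k)$ for which the corresponding walk hits $0$ by time $k$ and the resulting tree satisfies $A$, giving $A_k = \{\tilde{X}^{(k)} \in B\}$.

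The second step is to bound the symmetric difference. Since $A_k\subseteq A$, one has $A\triangle A_k = A\setminus A_k \subseteq \{|T_\lambda| > k\}$, so
\begin{equation*}
\prstart{A\triangle A_k}{\lambda} \le \prstart{|T_\lambda| > k}{\lambda}.
\end{equation*}
The third step is to prove that the right-hand side decays exponentially. This is the standard subcritical tail estimate: the increments $X_i-1$ have mean $\lambda-1 < 0$ and bounded exponential moments, so by a Chernoff bound, $\prstart{S_k\ge 1}{\lambda} \le e^{-ck}$ for some $c=c(\lambda)>0$. Since $\{|T_\lambda|>k\}\subseteq \{S_k\ge 1\}$, this yields $\prstart{|T_\lambda|>k}{\lambda}\le e^{-ck}$, completing the verification of~\eqref{key6}.

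I do not anticipate a serious obstacle: the argument is just the combination of the BFS encoding with the subcritical exponential tail. The only point that requires a little care is making explicit the subset $B\subseteq \N^k$ so that $A_k$ genuinely fits the definition of $k$-tautologically determined, as opposed to being ``determined by $X_1,\dots, X_k$'' in a looser sense. Note also that the constants $c,C,k_0$ are allowed to depend on $\lambda$, so no uniformity on $I$ is required; this is essential since the tail constant $c(\lambda)$ degenerates as $\lambda\uparrow 1$.
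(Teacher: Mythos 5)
Your proof is correct and follows essentially the same route as the paper: truncate $A$ by the event that the tree has at most $k$ vertices, observe the symmetric difference is contained in $\{|T_\lambda|>k\}\subseteq\{\sum_{i=1}^k X_i\ge k\}$, and apply the subcritical Chernoff bound for the Poisson$(k\lambda)$ sum. The only difference is that you spell out, via the BFS walk, why $A\cap\{|T_\lambda|\le k\}$ is genuinely $k$-tautologically determined, a point the paper asserts without detail.
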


\begin{proof}[\bf Proof]
Let $A_k=A\cap \{|T|<k\}$. Since $A_k$ is a $k$-tautologically determined event, it suffices to show that for every $\lambda<1$ there exist positive constants $c$ and $C$ so that for all $k$
\begin{align}\label{eq:rapidsub}
\prstart{A\triangle A_k}{\lambda} \leq Ce^{-ck}.
\end{align}
We now have
\[
\prstart{A\triangle A_k}{\lambda}= \prstart{A\setminus A_k}{\lambda} \leq \prstart{|T|\geq k}{\lambda}\leq \prstart{\sum_{i=1}^{k}X_i\geq k}{\lambda}.
\]
Using that $\sum_{i=1}^{k}X_i$ has the Poisson distribution with parameter $k\lambda$ and $\lambda<1$ proves~\eqref{eq:rapidsub} (See, e.g., Appendix A in \cite{AlonSpencer}),  and this concludes the proof.
\end{proof}

\begin{remark}\rm{
One interpretation of the proposition above is that Poisson Galton--Watson trees do not exhibit a phase transition in \emph{any} property over the interval $I = (0,1)$.}
 \end{remark}

\begin{lemma}\label{lem:even}
	Let $E_k$ be the set of nodes amongst the first $k$ which lie on an even level. Then for every $\lambda \in (0,\infty)$ there exists a positive constant $c$ so that
	\begin{equation}\label{key9}
	\prstart{|E_k| \le \left\lfloor\frac{k}{2\lambda + 1}\right\rfloor,|T| \ge k }{\lambda }\le e^{-ck}.
	\end{equation}
\end{lemma}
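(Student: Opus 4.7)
The plan is to combine a deterministic BFS counting identity with Poisson Chernoff bounds, handling the randomness of the index set $E_k$ by parametrising via level-size profiles rather than arbitrary subsets.

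The key deterministic observation is that on $\{|T|\ge k\}$ every vertex among the first $k$ lying on an odd level has its parent on an even level among $\{1,\ldots,k-1\}$: in BFS order parents precede children, and on $\{|T|\ge k\}$ the children of vertex~$k$ itself carry labels $\ge k+1$. Counting parent-child pairs yields the deterministic inequality
\[
k-|E_k| \;=\; |O_k| \;\le\; \sum_{v\in E_k\cap \{1,\ldots,k-1\}} X_v \;\le\; \sum_{v\in E_k} X_v.
\]
Setting $K:=\lfloor k/(2\lambda+1)\rfloor$, the hypothesis $|E_k|\le K$ therefore forces $\sum_{v\in E_k}X_v\ge k-K\ge 2\lambda K\ge 2\lambda|E_k|$; that is, the empirical mean of the iid $\mathrm{Poisson}(\lambda)$ offspring counts at the random indices $v\in E_k$ is at least twice the true mean~$\lambda$. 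For any fixed subset $S\subseteq\{1,\ldots,k\}$ of size~$e$, $\sum_{v\in S}X_v\sim \mathrm{Poisson}(\lambda e)$ and the Poisson Chernoff bound gives $\pr{\sum_{v\in S}X_v\ge 2\lambda e}\le \exp(-\lambda e(2\log 2-1))$.

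To convert this per-subset estimate into a bound on the full probability, I would decompose by BFS depth. Writing $L_\ell$ for the number of level-$\ell$ vertices and $T_\ell=L_0+\cdots+L_\ell$, on $\{|T|\ge k\}$ there is a unique $L$ with $T_L<k\le T_{L+1}$. For each value of $L$, the target event restricts the level-size profile $(L_0,L_1,\ldots,L_L,L_{L+1})$: the sum of even-indexed sizes (plus a partial final level if $L+1$ is even) is at most $K$, while $L_{L+1}\ge k-T_L>0$. The joint probability of an admissible profile factors via the Markov property $L_{\ell+1}\mid L_\ell\sim \mathrm{Poisson}(\lambda L_\ell)$ as a product of Poisson pmfs, and applying the Poisson tail bound to the last level $L_{L+1}$ (using $L_L\le K$ together with the ratio $k-T_L\ge 2\lambda L_L$ coming from the observation above) yields an exponential factor per profile. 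Summing the Markov-factorised bounds over $L\le k$ and over admissible profiles produces the desired $e^{-ck}$ estimate.

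The main obstacle is the combinatorial accounting in this last summation: a naive union bound over subsets $S\subseteq\{1,\ldots,k\}$ of size~$e$ carries a factor $\binom{k}{e}$ that cannot be absorbed by the Chernoff gain once $\lambda\ge 1/2$. The resolution is to sum instead along the level-size Markov chain, allowing each generation to contribute its own Chernoff factor; the tight ratio $2\lambda$ in the deterministic inequality is precisely what makes the accumulated exponential gain strictly dominate the polynomial-in-$k$ count of admissible profiles, producing an explicit (though not sharp) constant $c=c(\lambda)>0$.
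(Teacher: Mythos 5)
Your deterministic skeleton is exactly the paper's: the identity $|O_k|+|E_k|=k$ on $\{|T|\ge k\}$, the observation that every odd-level node among the first $k$ is a child of a node in $E_k$, hence $k-|E_k|\le\sum_{v\in E_k}X_v$, and the arithmetic that $|E_k|\le\lfloor k/(2\lambda+1)\rfloor$ forces the empirical offspring mean over $E_k$ to be at least $2\lambda$. Where you diverge is in the probabilistic step, and there your proposal has a genuine gap. The difficulty you flag (a union bound over random index sets costing $\binom{k}{e}$) is real if one insists on summing over subsets of BFS labels, but your proposed cure --- decomposing over level-size profiles $(L_0,\dots,L_{L+1})$ --- does not work as described. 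The number of admissible profiles with $T_L<k\le T_{L+1}$ is \emph{exponential} in $k$ (already the number of compositions of $k$ is $2^{k-1}$), not polynomial as you claim, so the accounting at the end is unjustified. Moreover, the per-profile exponential gain you extract from the last level alone is not there: the deficit $k-T_L$ can be $O(1)$, and the inequality $k-T_L\ge 2\lambda L_L$ you invoke does not follow from the deterministic counting identity (e.g.\ when $L+1$ is odd the partial final level contributes to $O_k$, not $E_k$). One could presumably repair this with a full large-deviations argument along the profile chain, but that is not what is written.

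The paper's resolution is much lighter and worth internalising: build the tree by attaching to the $i$-th even-level vertex (in BFS rank, not BFS label) an offspring count $Y_i$, where $Y_1,Y_2,\dots$ are i.i.d.\ $\mathrm{Poisson}(\lambda)$ \emph{by construction}. Then the deterministic inequality reads $\sum_{i=1}^{|E_k|}Y_i\ge k-|E_k|$, and since the $Y_i$ are nonnegative, on $\{|E_k|\le n\}$ with $n=\lfloor k/(2\lambda+1)\rfloor$ one has the pointwise inclusion
\begin{equation*}
\bigl\{|E_k|\le n,\ |T|\ge k\bigr\}\subseteq\Bigl\{\textstyle\sum_{i=1}^{n}Y_i\ge k-n\Bigr\},
\end{equation*}
which is a tail event for a single fixed $\mathrm{Poisson}(n\lambda)$ variable with $k-n\ge 2\lambda n$; one Chernoff bound gives $e^{-ck}$. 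No union bound and no profile enumeration are needed --- the randomness of the index set is absorbed entirely by re-indexing and monotonicity.
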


\begin{proof}[\bf Proof]
	Let $E_k$ ($O_k$) be the set of nodes amongst the first $k$ which lie on an even (odd) level. On the event $\left\{|T| \ge k\right\}$ all the first $k$ nodes exist, and hence
	\begin{equation}\label{keyodd}
	|O_k| + |E_k| = k.
	\end{equation}
	Let $Y_1,\ldots$ be i.i.d.\ Poisson$(\lambda)$ random variables. If the $i$-th vertex on an even level exists, then attach to it $Y_i$ children.
	From~\eqref{keyodd} we then get that on the event $\left\{|T| \ge k\right\}$
	\[
	\sum_{i=1}^{|E_k|} Y_i \geq |O_k| = k-|E_k|.
	\]
	Set $n=\lfloor k/(2\lambda +1)\rfloor$. There exists a positive constant $c$ so that
	\begin{align*}
		\prstart{|E_k|\leq n, |T|\geq k}{\lambda} \leq \prstart{\sum_{i=1}^{n}Y_i\geq k-n}{\lambda}\leq e^{-ck}
	\end{align*}
	 and this concludes the proof.
\end{proof}

Next we prove a more general statement than the one given in~Proposition~\ref{pro:examples}. As noted in the Introduction, this statement implies that there are uncountably many rapidly determined properties.

In the following, if $F \subset \mathbb{N}$ is a set of levels, we say a node lies on an \emph{$F$-level} if the level of the node is contained in $F$.
\begin{proposition}\label{uncount}
The event
\[
A = \left\{ \text{there exists a node on an even level with exactly one child}\right\}
\] is rapidly determined on the interval $I = (\lambda_0, \lambda_1)$ for any $0\le\lambda_0 < \lambda_1 \le \infty$.
	Moreover, if $F \subset \N$ is any set of levels, and $B$ is the event:
	\[
	B = \left\{ \text{there exists a node on an $F$-level with exactly two children}\right\}
	\]
	 then $A\cup B$ is a rapidly determined event.
\end{proposition}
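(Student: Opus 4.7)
My plan is to approximate each of $A$ and $B$ by its natural truncation to the first $k$ BFS positions, and then reduce both halves of the proposition to a single estimate on $A_k^c\cap\{|T|>k\}$. Define
\[
A_k=\left\{\exists\, j\le k:\ j\in E_k,\ X_j=1\right\}
\]
and, analogously, $B_k$ as the event that some $j\le k$ exists in $T$, lies on an $F$-level, and satisfies $X_j=2$. Both events are $k$-tautologically determined, since the existence and level of each vertex $j\le k$ are determined by $X_1,\ldots,X_{j-1}$ and the relevant child count is $X_j$. Since $A_k\subseteq A$ and $B_k\subseteq B$, the symmetric differences reduce to $A\triangle A_k=A\setminus A_k$ and $(A\cup B)\triangle(A_k\cup B_k)=(A\cup B)\setminus(A_k\cup B_k)$.

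The crucial observation I will exploit is that on $\{|T|\le k\}$ the entire tree has already been exposed after the first $k$ seed entries, so every witness of $A$ or $B$ must lie in $\{1,\ldots,k\}$; hence $A=A_k$ and $B=B_k$ on that event. Consequently
\[
(A\cup B)\setminus(A_k\cup B_k)=(A\cup B)\cap A_k^c\cap B_k^c\subseteq A_k^c\cap\{|T|>k\},
\]
and the same inclusion (omitting $B,B_k$) bounds $A\setminus A_k$. It therefore suffices to establish
\[
\prstart{A_k^c,\ |T|>k}{\lambda}\le Ce^{-ck}\qquad\text{for every }\lambda\in I,
\]
and this single estimate settles both parts of the proposition.

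To prove this estimate, I will let $\sigma_1<\sigma_2<\cdots$ be the BFS indices of the even-level vertices (setting $\sigma_i=\infty$ if fewer than $i$ such vertices exist), write $\mathcal{F}_j=\sigma(X_1,\ldots,X_j)$, and set $p=\lambda e^{-\lambda}$. Because $\{\sigma_i=j\}\in\mathcal{F}_{j-1}$, the strong Markov property for the i.i.d.\ seed gives $\prcond{X_{\sigma_i}\neq 1}{\mathcal{F}_{\sigma_i-1}}{\lambda}=1-p$ on $\{\sigma_i<\infty\}$. Iterating the tower property $n$ times then produces the product bound
\[
\prstart{\sigma_n\le k,\ X_{\sigma_i}\ne 1\ \text{for all }i\le n}{\lambda}\le (1-p)^n.
\]
Setting $n^{*}=\lfloor k/(2\lambda+1)\rfloor$ and splitting according to whether $|E_k|\ge n^{*}$ gives
\[
\prstart{A_k^c,\,|T|>k}{\lambda}\le \prstart{|E_k|<n^{*},\,|T|>k}{\lambda}+(1-p)^{n^{*}},
\]
and Lemma~\ref{lem:even} controls the first term by $e^{-c_1 k}$, so both pieces decay exponentially in $k$.

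I expect the iterated strong Markov step to be the main technical subtlety. A naive attempt might claim that $X_{\sigma_1},X_{\sigma_2},\ldots$ are i.i.d.\ Poisson$(\lambda)$ when one conditions on $\{\sigma_n<\infty\}$, but this is false: conditioning on the survival of the $n$-th even-level vertex biases earlier offspring counts upward (for instance $\prstart{X_{\sigma_1}=0}{\lambda}>0$ while $\prcond{X_{\sigma_1}=0}{\sigma_2<\infty}{\lambda}=0$). The correct route is to stay within stopping-time calculus and peel off one factor of $1-p$ at a time using $\prcond{X_{\sigma_i}\ne 1}{\mathcal{F}_{\sigma_i-1}}{\lambda}=1-p$; this yields the unconditional product bound $(1-p)^n$ without any spurious independence claim, and together with Lemma~\ref{lem:even} closes out both halves of the proposition.
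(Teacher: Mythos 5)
Your proof is correct and follows essentially the same route as the paper's: the same truncations $A_k,B_k$, the same reduction of the symmetric difference to $A_k^c\cap\{|T|>k\}$, the same split at $n=\lfloor k/(2\lambda+1)\rfloor$ via Lemma~\ref{lem:even}, and the same $(1-\lambda e^{-\lambda})^n$ bound. The only (cosmetic) difference is that you justify the product bound by peeling off factors with stopping times and the tower property, whereas the paper attaches an auxiliary i.i.d.\ sequence $Y_i$ to the even-level vertices and uses monotone set inclusion; both are valid formalizations of the same step.
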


\begin{proof}[\bf Proof]
Let $E_k$ and $F_k$ be the sets of nodes among the first $k$ which lie on an even/$F$-level, respectively. As in the proof of Lemma~\ref{lem:even}, let $Y_1,\ldots$ be i.i.d.\ Poisson$(\lambda)$ random variables, representing the number of children that are attached to the $i$-th vertex on an even level. We now define the event $A_k$ (resp. $B_k$) that in $E_k$ ($F_k$) there exists a node with exactly one (two) child(ren), i.e.,
\[
A_k = \bigcup_{i = 1}^{|E_k|} \left\{Y_i = 1\right\}, \qquad B_k = \bigcup_{i = 1}^{|F_k|} \left\{Y_i = 2\right\}
\]
Set $n=\lfloor k/(2\lambda +1)\rfloor$. We now have
\begin{align*}
	\prstart{(A\cup B)\triangle (A_k \cup B_k)}{\lambda} &\le \prstart{(A\cup B)\setminus (A_k\cup B_k), |T_\lambda|\geq k}{\lambda} \leq  \prstart{\bigcap_{i=1}^{|E_k|}\{Y_i\neq 1\}, |T_\lambda|\geq k}{\lambda} \\
	&\leq \prstart{|E_k|\leq n, |T_\lambda|\geq k}{\lambda} + \prstart{\bigcap_{i=1}^{n}\{Y_i\neq 1\}}{\lambda}\\
	&\leq e^{-ck} + (1-\lambda e^{-\lambda})^n \leq c_1 e^{-c_2k}
\end{align*}
for positive constants $c,c_1$ and $c_2$, where in the last inequality we have used Lemma~\ref{lem:even}.
\end{proof}

\bibliography{biblio}
\bibliographystyle{abbrv}

\end{document}